\newtheorem{theorem}{Theorem}
\newtheorem{definition}[theorem]{Definition}
\newtheorem{corollary}[theorem]{Corollary}
\newtheorem{lemma}[theorem]{Lemma}
\theoremstyle{remark}
\newtheorem{example}[theorem]{Example}
\newtheorem{remark}[theorem]{Remark}
\title{Kronecker Square Roots and The Block vec Matrix}
\author{Ignacio Ojeda}
\address{Departamento de Matem\'aticas, Universidad de Extremadura,
E-06071 Badajoz, Espa\~na.} \email{ojedamc@unex.es}\thanks{The author is supported by the project MTM2012-36917-C03-01, National Plan I+D+I and by Junta de Extremadura (FEDER funds).}
\subjclass[2010]{15A23 (Primary), 15A69 (Secondary).}
\keywords{Kronecker product; factorization; square root.}
\begin{document}

\begin{abstract}
Using the block vec matrix, I give a necessary and sufficient condition for factorization of a matrix into the Kronecker product of two other matrices. As a consequence, I obtain an elementary algorithmic procedure to decide whether a matrix has a square root for the Kronecker product.
\end{abstract}

\maketitle

\section*{Introduction}

My statistician colleague,  J.E. Chac\'on, asked me how to decide if a real given matrix $A$ has a square root for the Kronecker product (i.e., if there exists a $B$ such that $A = B \otimes B$) and, in the positive case, how to compute it. His questions were motivated by the fact that, provided that a certain real positive definite symmetric matrix has a Kronecker square root, explicit asymptotic expressions for certain estimator errors could be obtained. See \cite{Chacon}, for a discussion
of the importance of multivariate kernel density derivative estimation.

This note is written mostly due to the lack of a suitable reference for the existence of square roots for the Kronecker product, and it is organized as follows: first of all, I study the problem of the factorization of a matrix into a Kronecker product of two matrices, by giving a necessary and sufficient condition under which this happens (Theorem \ref{Th1}). As a preparation for the main result, I introduce the  \emph{block vec matrix} (Definition \ref{Def BV}). Now, the block vec matrix and Theorem \ref{Th1} solve our problem in a constructive way.

\section{Kronecker product factorization}

Throughout this note $\mathbb{N}$, $\mathbb{R}$, and $\mathbb{C}$ denote the sets of non-negative integers, real numbers, and complex numbers, respectively. All matrices considered here have real or complex entries;
$A^\top$ denotes the transpose of $A$ and $\mathrm{tr}(A)$ denotes its trace.

The operator that transforms a matrix into a stacked vector is known as the \emph{vec operator} (see, \cite[Definition 4.2.9]{HJ91} or \cite[$\S$ 7.5]{Schott}). If $A = \big( \mathbf{a}_1 \vert \ldots \vert \mathbf{a}_n \big)$ is an $m \times n$ matrix whose columns are $\mathbf{a}_1, \ldots, \mathbf{a}_n$, then $\mathrm{vec}(A)$ is the $m n \times 1$ matrix
$$\mathrm{vec}(A) = \left(\begin{array}{c} \mathbf{a}_1 \\ \vdots \\ \mathbf{a}_n \end{array}\right).$$

The following definition generalizes the vec operation and is the key to all that follows.

\begin{definition}\label{Def BV}
Let $A = (A_{ij})$ be an $mp \times nq$ matrix partitioned into block matrices $A_{ij}$, each of order $p \times q$. The \emph{block vec matrix} of $A$ corresponding to the given partition is the $m n \times p q$ matrix $$\mathrm{vec}^{(p \times q)}(A) = \left(\begin{array}{c} A_1 \\ \vdots \\ A_n \end{array}\right),\quad \text{where each}\ A_j = \left(\begin{array}{c} \mathrm{vec}(A_{1j})^\top \\ \vdots \\ \mathrm{vec}(A_{mj})^\top \end{array}\right).$$
\end{definition}

If $A$ is $m \times n$, it is instructive to verify the following identities corresponding to four natural ways to partition it:
\begin{itemize}
\item $p=q=1: \mathrm{vec}^{(1 \times 1)}(A) = \mathrm{vec}(A)$,
\item $p=m, q=n: \mathrm{vec}^{(m \times n)}(A) = \mathrm{vec}(A)^\top$,
\item $p=1, q=n$ (partition by rows)$: \mathrm{vec}^{(1 \times n)}(A) = A$,
\item $p=m, q=1$ (partition by columns)$: \mathrm{vec}^{(m \times 1)}(A) = A^\top$.
\end{itemize}
If $A$ is $mp \times nq$ and $\mathrm{vec}(A)$ is partitioned into $nq$ blocks, each of size $mp \times 1$, then a computation reveals that $\mathrm{vec}^{(mp \times 1)}(\mathrm{vec}(A)) = A^\top$.

Let $B = (b_{ij})$ be an $m \times n$ matrix and let $C$ be a $p \times q$ matrix. The \emph{Kronecker product} of $B$ and $C$, denoted by $B \otimes C$, is the $mp \times nq$ matrix $$B \otimes C = \left(\begin{array}{cccc} b_{11} C & b_{12} C & \cdots & b_{1n} C \\ b_{21} C & b_{22} C & \cdots & b_{2n} C \\ \vdots & \vdots & & \vdots \\ b_{m1} C& b_{m2} C & \ldots & b_{mn} C \end{array}\right).$$  

The following result is straightforward; see \cite[Theorem 1]{VP93}.

\begin{lemma}\label{Lemma1}
Let $B$ be an $m \times n$ matrix and let $C$ be a $p \times q$ matrix. Then $$\mathrm{vec}^{(p \times q)}(B \otimes C) = \mathrm{vec}(B) \mathrm{vec}(C)^\top.$$ In particular, $\mathrm{vec}^{(p \times q)}(B \otimes C)^\top = \mathrm{vec}^{(m \times n)}(C \otimes B)$.
\end{lemma}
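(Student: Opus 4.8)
The plan is to compute $\mathrm{vec}^{(p\times q)}(B\otimes C)$ directly from the definitions and to recognize the answer as an outer product. Write $B=(b_{ij})$ and let $\mathbf{b}_1,\ldots,\mathbf{b}_n$ be its columns. By the definition of the Kronecker product, $B\otimes C$ is $mp\times nq$ and, under the natural partition into $p\times q$ blocks, its $(i,j)$ block is $A_{ij}=b_{ij}\,C$. This is exactly the partition to which Definition \ref{Def BV} will be applied.

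First I would apply the $\mathrm{vec}$ operator blockwise. Since $\mathrm{vec}$ is linear, $\mathrm{vec}(A_{ij})=b_{ij}\,\mathrm{vec}(C)$, hence $\mathrm{vec}(A_{ij})^\top=b_{ij}\,\mathrm{vec}(C)^\top$. Stacking these rows over $i=1,\ldots,m$ as prescribed in Definition \ref{Def BV} gives
\[
A_j=\begin{pmatrix} b_{1j}\,\mathrm{vec}(C)^\top \\ \vdots \\ b_{mj}\,\mathrm{vec}(C)^\top \end{pmatrix}=\mathbf{b}_j\,\mathrm{vec}(C)^\top .
\]
Stacking $A_1,\ldots,A_n$ vertically, again as in Definition \ref{Def BV}, then yields
\[
\mathrm{vec}^{(p\times q)}(B\otimes C)=\begin{pmatrix} \mathbf{b}_1\,\mathrm{vec}(C)^\top \\ \vdots \\ \mathbf{b}_n\,\mathrm{vec}(C)^\top \end{pmatrix}=\begin{pmatrix} \mathbf{b}_1 \\ \vdots \\ \mathbf{b}_n \end{pmatrix}\mathrm{vec}(C)^\top=\mathrm{vec}(B)\,\mathrm{vec}(C)^\top,
\]
the last equality being precisely the column partition of $\mathrm{vec}(B)$. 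This is the first assertion.

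For the ``in particular'' I would simply transpose: $\mathrm{vec}^{(p\times q)}(B\otimes C)^\top=\bigl(\mathrm{vec}(B)\,\mathrm{vec}(C)^\top\bigr)^\top=\mathrm{vec}(C)\,\mathrm{vec}(B)^\top$. Applying the first assertion with the roles of $B$ and $C$ interchanged (so $C$ plays the outer role and $B$ the block role) gives $\mathrm{vec}^{(m\times n)}(C\otimes B)=\mathrm{vec}(C)\,\mathrm{vec}(B)^\top$, and comparing the two identities proves the claim.

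There is no genuine obstacle here; the only point requiring care is the bookkeeping of the two nested indexings in Definition \ref{Def BV} — the inner stacking over the block-row index $i$ and the outer stacking over the block-column index $j$ — and making sure the linearity step $\mathrm{vec}(b_{ij}C)=b_{ij}\mathrm{vec}(C)$ is applied to the correct block. A dimension check confirms consistency: each $A_j$ is $m\times pq$, there are $n$ of them, so $\mathrm{vec}^{(p\times q)}(B\otimes C)$ is $mn\times pq$, matching the product of $\mathrm{vec}(B)\in\mathbb{R}^{mn\times 1}$ with $\mathrm{vec}(C)^\top\in\mathbb{R}^{1\times pq}$.
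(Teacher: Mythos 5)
Your proof is correct. The paper itself gives no argument for this lemma (it is dismissed as straightforward with a citation to Van Loan--Pitsianis), and your direct blockwise computation --- identifying the $(i,j)$ block of $B\otimes C$ as $b_{ij}C$, applying $\mathrm{vec}$ to each block, and recognizing the stacked result as the outer product $\mathrm{vec}(B)\,\mathrm{vec}(C)^\top$ --- is exactly the intended verification, with the ``in particular'' part correctly obtained by transposing and reapplying the identity with $B$ and $C$ interchanged.
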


It may be (but need not) be possible to factor a given matrix, suitably partitioned, as a Kronecker product of two other matrices. For example, a zero matrix can always be factored as a Kronecker product of a zero matrix and any matrix of suitable size. The following theorem provides a necessary and sufficient condition for a Kronecker factorization.

\begin{theorem}\label{Th1}
Let $A = (A_{ij})$ be a nonzero $mp \times nq$ matrix, partitioned into blocks of order $p \times q$. There exist matrices $B$ (of order $m \times n$) and $C$ (of order $p \times q$) such that $A = B \otimes C$ if and only if $\mathrm{rank}(\mathrm{vec}^{(p \times q)}(A)) = 1$.
\end{theorem}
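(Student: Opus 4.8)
The plan is to reduce everything to Lemma \ref{Lemma1}, together with one structural observation: the map $\mathrm{vec}^{(p \times q)}$, regarded as a map from $mp \times nq$ matrices partitioned into $p \times q$ blocks to $mn \times pq$ matrices, is a bijection. Indeed it merely relabels the $mnpq$ entries of $A$, sending the $(k,l)$-entry of the block $A_{ij}$ to a fixed position of $\mathrm{vec}^{(p \times q)}(A)$ depending only on $(i,j,k,l)$; in particular it is linear and injective, so two matrices with the same block vec matrix are equal. The same elementary fact for the ordinary $\mathrm{vec}$ operator says that $\mathrm{vec}$ is a bijection from $m \times n$ matrices onto $\mathbb{R}^{mn}$, and likewise from $p \times q$ matrices onto $\mathbb{R}^{pq}$.

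For the forward implication, suppose $A = B \otimes C$. By Lemma \ref{Lemma1}, $\mathrm{vec}^{(p \times q)}(A) = \mathrm{vec}(B)\, \mathrm{vec}(C)^\top$, an outer product of two column vectors, hence a matrix of rank at most $1$. Since $A \neq 0$, neither factor can vanish, so $\mathrm{vec}(B) \neq 0$ and $\mathrm{vec}(C) \neq 0$, and therefore $\mathrm{rank}(\mathrm{vec}^{(p \times q)}(A)) = 1$ exactly.

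For the converse, assume $\mathrm{rank}(\mathrm{vec}^{(p \times q)}(A)) = 1$. Then $\mathrm{vec}^{(p \times q)}(A) = \mathbf{u}\, \mathbf{v}^\top$ for some nonzero vectors $\mathbf{u} \in \mathbb{R}^{mn}$ and $\mathbf{v} \in \mathbb{R}^{pq}$ (the standard rank-one decomposition; it is unique up to a reciprocal scalar, and any choice serves). Using the bijectivity of $\mathrm{vec}$ noted above, pick the unique $B$ and $C$ with $\mathbf{u} = \mathrm{vec}(B)$ and $\mathbf{v} = \mathrm{vec}(C)$. Applying Lemma \ref{Lemma1} in the reverse direction gives $\mathrm{vec}^{(p \times q)}(A) = \mathrm{vec}(B)\, \mathrm{vec}(C)^\top = \mathrm{vec}^{(p \times q)}(B \otimes C)$, and injectivity of $\mathrm{vec}^{(p \times q)}$ then forces $A = B \otimes C$.

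The only step requiring genuine care — and the one I would spell out — is the injectivity of $\mathrm{vec}^{(p \times q)}$ on the space of $mp \times nq$ matrices with the prescribed block partition; everything else is a direct appeal to Lemma \ref{Lemma1} and elementary linear algebra. One could sidestep the abstract injectivity argument by reading off $B$ and $C$ entrywise from $\mathbf{u}$ and $\mathbf{v}$ and then verifying $A = B \otimes C$ block by block, but invoking the bijection is cleaner and makes transparent why the factorization is unique up to a scalar.
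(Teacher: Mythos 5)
Your proof is correct and follows essentially the same route as the paper: both directions rest on Lemma \ref{Lemma1} together with the fact that a rank-one matrix is an outer product of two nonzero vectors. The only difference is cosmetic: for the converse the paper reads the factors off directly (taking $C$ to be a nonzero block $A_{rs}$ and the $b_{ij}$ to be the proportionality scalars relating the rows of $\mathrm{vec}^{(p \times q)}(A)$ to $\mathrm{vec}(A_{rs})^\top$) and then verifies $A = B \otimes C$ from the definition of the Kronecker product, whereas you pass through an abstract rank-one factorization $\mathbf{u}\mathbf{v}^\top$ and appeal to the injectivity of $\mathrm{vec}^{(p \times q)}$ --- the entry-relabeling observation you rightly single out as the one step needing care --- which is equally valid.
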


\begin{proof}
If $A = B \otimes C$ is a factorization of the stated form, then $B, C, \mathrm{vec}(B)$, and $\mathrm{vec}(C)$ must all be nonzero. Lemma \ref{Lemma1} ensures that $$\mathrm{rank}(\mathrm{vec}^{(p \times q)}(A)) = \mathrm{rank}(\mathrm{vec}^{(p \times q)}(B \otimes C)) = \mathrm{rank}(\mathrm{vec}(B) \mathrm{vec}(C)^\top) = 1.$$
Conversely, since $A \neq 0$, there are indices $r$ and $s$ such that $A_{rs} \neq 0$ and hence $\mathrm{vec}(A_{rs}) \neq 0$. Since $\mathrm{rank}(\mathrm{vec}^{(p \times q)}(A)) = 1$, each row of $\mathrm{vec}^{(p \times q)}(A)$ is a scalar multiple of any nonzero row. Thus, there are scalars $b_{ij}$ such that each $\mathrm{vec}(A_{ij}) = b_{ij}\, \mathrm{vec}(A_{rs})$. This means that $A = B \otimes C$, in which $B = (b_{ij})$ and $C = A_{rs}$.
\end{proof}

Notice that the preceding proof provides a simple construction for a pair of Kronecker factors for $A$ if $\mathrm{rank}(\mathrm{vec}^{(p \times q)}(A)) = 1$.

The block vec matrix can be used to detect not only whether a given matrix has a Kronecker factorization of a given form, but also, if it does not, how closely it can be approximated in the Frobenius norm by a Kronecker product. A best approximation is determined by the singular value decomposition of the block vec matrix. For details, see \cite{VP93}, where the block vec matrix is called the \emph{rearrangement matrix}.

\begin{example}\label{ex1}
Consider $$A =  \left(\begin{array}{cc} 2 & 1 \\ 2 & 0 \\ \cline{1-2} 3 & 0 \\ 0 & 3 \end{array}\right).$$ Since $$\mathrm{vec}^{(2 \times 2)}(A) = \left(\begin{array}{cccc} 2 & 2 & 1 & 0 \\ 3 & 0 & 0 & 3 \end{array}\right)$$ has rank $2,$ Theorem \ref{Th1} ensures that $A \neq B \otimes C$, for any $B$ and $C$ of order $2 \times 1$ and $2 \times 2$, respectively.
\end{example}

The set of matrices that factorices into the Kronecker product of two other matrices have the following interpretation in Algebraic Geometry.

\begin{remark}
Let $A$ be a real $mp \times nq$ matrix and consider the big matrix 
$$ M = \left(\begin{array}{c} I_{mn} \otimes \mathbf{u}_{pq}^\top \\ \mathbf{u}_{mn}^\top \otimes I_{pq} \end{array}\right)
$$
where $\mathbf{u}_\bullet$ is the all-ones vector of dimension $\bullet$. Let $K$ be equal to $\mathbb{R}$ or $\mathbb{C}$ and let $$\varphi_M : K[X_{11}, \ldots, X_{m1}, X_{12}, \ldots, X_{mn\, pq}] \longrightarrow K[t_1, \ldots, t_{mn+pq}]$$ be the $K-$algebra map such that $\varphi(\mathbf{X}^\mathbf{u}) = \mathbf{t}^{M \mathbf{u}}$, with $\mathbf{X}^\mathbf{u} := X_{11}^{u_1} \cdots X_{mn\, pq}^{u_{mn pq}}$ and $\mathbf{t}^\mathbf{v} := t_1^{v_1} \cdots t_{mn+pq}^{v_{mn+pq}}$. Then $X = (x_{ij})$ has rank $1$ if and only if $\mathrm{vec}(X)^\top$ is a zero of $\ker \varphi_M$ (see, e.g. \cite[Section 2]{GMS}). Therefore, by Theorem \ref{Th1}, the set of $mn \times pq$ matrices which factor into the Kronecker product of an $m \times n$ matrix and a $p \times q$ matrix is the following \emph{algebraic set} 
$$\mathcal{V}(\ker \varphi_M) = \Big\{ A \in K^{mn \times pq} \mid \mathrm{vec}(\mathrm{vec}^{(p \times q)}(A))^\top \in \ker \varphi_M \Big\}.$$

In Algebraic Statistics, $\ker \varphi_M$ is the ideal associated to two independent random variables with values in $\{1, \ldots, mn\}$ and $\{1, \ldots, pq\}$. Thus, as it is well know in Statistics, \emph{factorization means independence and vice versa}.
\end{remark}

We can now give a solution to the problem that motivates this paper.

\begin{corollary}
If $A$ is a nonzero $m^2 \times n^2$ matrix and $B$ an $n \times n$ matrix, then
\begin{itemize}
\item[(a)] $A = B \otimes B$ if and only if $\mathrm{vec}^{(m \times n)}(A) = \mathrm{vec}(B) \mathrm{vec}(B)^\top$,
\item[(b)] if $A = B \otimes B$, then $\mathrm{vec}^{(m \times n)}(A)$ is symmetric and has rank one.
\end{itemize}
\end{corollary}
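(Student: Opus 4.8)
The plan is to derive both parts of the corollary directly from Lemma~\ref{Lemma1} and Theorem~\ref{Th1}, using in addition only the elementary observation that the block vec operation is injective. Throughout, $B$ is taken of the size for which $B\otimes B$ has the same shape as $A$, so that $\mathrm{vec}^{(m\times n)}(A)$ and $\mathrm{vec}^{(m\times n)}(B\otimes B)$ are matrices of the same (square) size; this is what makes the symmetry statement in part (b) meaningful.

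For part (a), the first step is to specialize Lemma~\ref{Lemma1} to the situation in which the second Kronecker factor equals the first: this gives $\mathrm{vec}^{(m\times n)}(B\otimes B)=\mathrm{vec}(B)\,\mathrm{vec}(B)^\top$. The forward implication is then immediate, since $A=B\otimes B$ yields $\mathrm{vec}^{(m\times n)}(A)=\mathrm{vec}^{(m\times n)}(B\otimes B)=\mathrm{vec}(B)\,\mathrm{vec}(B)^\top$. For the converse I would invoke the fact that, for fixed block sizes, $\mathrm{vec}^{(m\times n)}(\cdot)$ is a bijection between matrices of the two relevant shapes, because by Definition~\ref{Def BV} it only rearranges entries; hence the equality $\mathrm{vec}^{(m\times n)}(A)=\mathrm{vec}(B)\,\mathrm{vec}(B)^\top=\mathrm{vec}^{(m\times n)}(B\otimes B)$ forces $A=B\otimes B$. (One could instead route the converse through Theorem~\ref{Th1}, but the injectivity argument is shorter.)

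For part (b), assume $A=B\otimes B$. By part (a), $\mathrm{vec}^{(m\times n)}(A)=\mathbf{v}\mathbf{v}^\top$ with $\mathbf{v}=\mathrm{vec}(B)$. Such an outer product is visibly symmetric and of rank at most one; and since $A\neq 0$ we have $B\neq 0$, hence $\mathbf{v}\neq 0$, so the rank is exactly one. (The rank assertion also follows at once from Theorem~\ref{Th1}, as $A=B\otimes B$ is a Kronecker factorization.)

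I do not anticipate any genuine difficulty: the whole content is carried by Lemma~\ref{Lemma1}, and the only point deserving an explicit word is the injectivity of the block vec operation used in the converse of~(a); the remaining steps are dimension bookkeeping together with the trivial facts that $\mathbf{v}\mathbf{v}^\top=(\mathbf{v}\mathbf{v}^\top)^\top$ and that $\mathbf{v}\mathbf{v}^\top$ has rank one precisely when $\mathbf{v}\neq 0$.
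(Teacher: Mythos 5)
Your proposal is correct and coincides with the paper's (implicit) argument: the corollary is stated there without proof as an immediate consequence of Lemma~\ref{Lemma1}, and your route---forward direction by specializing the lemma to $C=B$, converse by the injectivity of the entry rearrangement $\mathrm{vec}^{(m\times n)}$, and part~(b) from the nonzero outer product $\mathrm{vec}(B)\mathrm{vec}(B)^\top$---is exactly the intended reasoning. You were also right to read $B$ as $m\times n$ (the ``$n\times n$'' in the statement is a typo), since only then does $B\otimes B$ have size $m^2\times n^2$.
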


Is the necessary condition in the preceding corollary sufficient? It is not surprising that the answer depends on the field. For example, if $m = n = 1$ and $A = (-1)$, then $\mathrm{vec}^{(1 \times 1)}(A) = (-1)$ is symmetric and has rank one, but $A$ has no real Kronecker square root; it does have complex Kronecker square roots $B = (\pm \mathrm{i})$, but these are the only ones.

\begin{theorem}
Let $A$ be an $m^2 \times n^2$ real or complex matrix. Suppose that $\mathrm{vec}^{(m \times n)}(A)$ is symmetric and has rank one. 
\begin{itemize}
\item[(a)] There is an $m \times n$ matrix $B$ such that $A = B \otimes B$.
\item[(b)] If $B$ and $C$ are $m \times n$ matrices such that $A = B \otimes B = C \otimes C$, then $C = \pm B$.
\item[(c)] If $A$ is real, there is a real $m \times n$ matrix $B$ such that $A = B \otimes B$ if and only if $\mathrm{tr}(\mathrm{vec}^{(m \times n)}(A)) > 0$.
\end{itemize}
\end{theorem}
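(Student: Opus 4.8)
The plan is to translate everything, via Lemma~\ref{Lemma1} and the preceding corollary, into statements about the single $mn\times mn$ matrix $M:=\mathrm{vec}^{(m\times n)}(A)$, which by hypothesis is symmetric of rank one; note that this already forces $M\neq 0$ and hence $A\neq 0$. The one genuinely linear-algebraic ingredient I would isolate first is the normal form of a symmetric rank-one matrix: \emph{if $M=M^\top$ has rank one, then $M=\gamma\,\mathbf{u}\mathbf{u}^\top$ for some nonzero vector $\mathbf{u}$ and some nonzero scalar $\gamma$.} This is immediate: write $M=\mathbf{x}\mathbf{y}^\top$ with $\mathbf{x},\mathbf{y}\neq 0$; comparing a column of $\mathbf{x}\mathbf{y}^\top$ in which $\mathbf{x}$ has a nonzero entry with the same column of $M^\top=\mathbf{y}\mathbf{x}^\top$ shows that $\mathbf{y}$ is a nonzero scalar multiple of $\mathbf{x}$.

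For (a) I would work over $\mathbb{C}$: pick $\delta$ with $\delta^2=\gamma$, set $\mathbf{w}:=\delta\,\mathbf{u}$, and let $B$ be the unique $m\times n$ matrix with $\mathrm{vec}(B)=\mathbf{w}$. Then $\mathrm{vec}(B)\mathrm{vec}(B)^\top=\mathbf{w}\mathbf{w}^\top=\gamma\,\mathbf{u}\mathbf{u}^\top=M$, so $A=B\otimes B$ by part (a) of the preceding corollary (equivalently, by Lemma~\ref{Lemma1} together with the injectivity of $\mathrm{vec}^{(m\times n)}$). For (b), if $A=B\otimes B=C\otimes C$ then the corollary gives $\mathrm{vec}(B)\mathrm{vec}(B)^\top=M=\mathrm{vec}(C)\mathrm{vec}(C)^\top$; since each of the nonzero vectors $\mathrm{vec}(B)$ and $\mathrm{vec}(C)$ spans the one-dimensional column space of $M$, we get $\mathrm{vec}(C)=\lambda\,\mathrm{vec}(B)$ with $\lambda\neq 0$, and substituting back yields $\lambda^2=1$; hence $\mathrm{vec}(C)=\pm\,\mathrm{vec}(B)$, i.e.\ $C=\pm B$.

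For (c), assume $A$ (hence $M$) real, so in the normal form we may take $\gamma\in\mathbb{R}\setminus\{0\}$ and $\mathbf{u}\in\mathbb{R}^{mn}\setminus\{0\}$. The decisive observation is that $\mathrm{tr}(M)=\gamma\,\mathrm{tr}(\mathbf{u}\mathbf{u}^\top)=\gamma\,\mathbf{u}^\top\mathbf{u}=\gamma\,\|\mathbf{u}\|^2$, so $\mathrm{tr}(M)$ and $\gamma$ have the same sign. If $\mathrm{tr}(M)>0$ then $\gamma>0$, $\delta:=\sqrt{\gamma}$ is real, and the matrix $B$ constructed as in (a) is real. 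Conversely, if $A=B\otimes B$ with $B$ real, then $\mathrm{tr}(M)=\mathrm{tr}\big(\mathrm{vec}(B)\mathrm{vec}(B)^\top\big)=\|\mathrm{vec}(B)\|^2$, which is positive because $B\neq 0$ (as $A\neq 0$).

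I do not expect a real obstacle: once Lemma~\ref{Lemma1} and the corollary are available, the argument is essentially formal. The points that require a little care are the symmetric rank-one normal form (the only step that is not pure bookkeeping), the passage from $\lambda\neq 0$ to $\lambda=\pm 1$ in (b), and, in (c), the fact that the field dependence is concentrated entirely in whether $\gamma$ has a square root — and that over $\mathbb{R}$ the trace of the block vec matrix is exactly the invariant that detects the sign of $\gamma$.
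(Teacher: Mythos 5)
Your proof is correct, but it takes a genuinely more elementary route than the paper. Where you prove by hand that a symmetric rank-one matrix $M=\mathbf{x}\mathbf{y}^\top$ must have the form $\gamma\,\mathbf{u}\mathbf{u}^\top$ with $\gamma\neq 0$ (by matching columns of $M$ and $M^\top$), the paper instead invokes Autonne's theorem --- the special singular value decomposition $Z=\sigma\,\mathbf{u}\mathbf{u}^\top$ of a complex symmetric matrix, with $\sigma>0$ the singular value and $\mathbf{u}$ a unit vector, unique up to $\mathbf{u}\mapsto-\mathbf{u}$ --- and for part (c) it uses the real spectral theorem to write $\mathrm{vec}^{(m\times n)}(A)=\lambda\,\mathbf{q}\mathbf{q}^\top$ and identifies $\lambda$ with the trace. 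The trade-off is clear: the paper gets part (b) for free from the uniqueness clause of Autonne's theorem, at the cost of citing a nontrivial decomposition, whereas you derive uniqueness directly (both $\mathrm{vec}(B)$ and $\mathrm{vec}(C)$ span the column space of $M$, so they are proportional, and substituting back forces $\lambda^2=1$) and localize the entire field dependence in whether $\gamma$ has a square root, with $\mathrm{tr}(M)=\gamma\|\mathbf{u}\|^2$ detecting its sign over $\mathbb{R}$. Everything checks: your normal-form argument is valid over both $\mathbb{R}$ and $\mathbb{C}$ (it uses only the transpose, never conjugation), $\mathbf{w}\mathbf{w}^\top\neq 0$ for $\mathbf{w}\neq 0$ even over $\mathbb{C}$ since some diagonal entry $w_j^2$ is nonzero, and the appeal to part (a) of the preceding corollary (or to Lemma~\ref{Lemma1} plus injectivity of the block vec rearrangement) to pass from $\mathrm{vec}(B)\mathrm{vec}(B)^\top=M$ back to $A=B\otimes B$ is exactly what that corollary provides. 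The only cosmetic slip is in the column-matching sentence, where the roles of $\mathbf{x}$ and $\mathbf{y}$ are swapped; the argument itself is unaffected.
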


\begin{proof}
Any complex symmetric matrix has a special singular value decomposition, that is unique in a certain way; see \cite[Corollary 4.4.4: Autonne's theorem]{HJ13}. In the case of a rank one symmetric matrix $Z$ whose largest (indeed, only nonzero) singular value is $\sigma$, Autonne's theorem says that there is a unit vector $\mathbf{u}$ such that $Z = \sigma \mathbf{u} \mathbf{u}^\top$. Moreover, if $\mathbf{v}$ is a unit vector such that $Z = \sigma \mathbf{v} \mathbf{v}^\top$, then $\mathbf{v} = \pm \mathbf{u}$. If we use Autonne's theorem to represent the block vec matrix as $\mathrm{vec}^{(m \times n)}(A) = \sigma \mathbf{u} \mathbf{u}^\top = (\sigma^{1/2} \mathbf{u}) (\sigma^{1/2} \mathbf{u})^\top$ and define $B$ by $\mathrm{vec}(B) = (\sigma^{1/2} \mathbf{u})$, we have $\mathrm{vec}^{(m \times n)}(A) = \mathrm{vec}(B) \mathrm{vec}(B)^\top$. The preceding corollary now ensures that $A = B \otimes B$ and the assertion in (b) follows from the uniqueness part of Autonne's theorem.

Now suppose that $A$ is real. If there is a real $B$ such that $A = B \otimes B$, then $\mathrm{tr}(\mathrm{vec}^{(m \times n)}(A) = \mathrm{tr}(\mathrm{vec}(B) \mathrm{vec}(B)^\top) = \mathrm{vec}(B)^\top B$ is positive since it is the square of the Euclidean norm of the (necessarily nonzero) real vector $\mathrm{vec}(B)$. Conversely, the spectral theorem ensures that any real symmetric matrix can be represented as $Q \Lambda Q^\top$, in which $Q$ is real orthogonal and $\Lambda$ is real diagonal. Since the block vec matrix is real symmetric and has rank one, we can take $\Lambda = \mathrm{diag}( \lambda, 0, \ldots, 0)$ and represent $\mathrm{vec}^{(m \times n)}(A) = \lambda \mathbf{q} \mathbf{q}^\top$ in which $\mathbf{q}$ is the first column of $Q$. If $\lambda = \mathrm{tr}(\mathrm{vec}^{(m \times n)}(A)) > 0$, then $\mathrm{vec}^{(m \times n)}(A) = (\lambda^{1/2} \mathbf{q}) (\lambda^{1/2} \mathbf{q})^\top = \mathrm{vec}(B) \mathrm{vec}(B)^\top,$ in which $\mathrm{vec}(B) = \lambda^{1/2} \mathbf{q}$ (and hence also $B$) is real.
\end{proof}

The uniqueness part of the preceding theorem has some perhaps surprising consequences.

\begin{corollary}
Let $A$ be a nonzero $m^2 \times n^2$ real or complex matrix and suppose that $A = B \otimes B$ for some $m \times n$ matrix $B$.
\begin{itemize}
\item[(a)] $A$ is symmetric if and only if $B$ is either symmetric or skew symmetric.
\item[(b)] $A$ is not skew symmetric.
\item[(c)] $A$ is Hermitian if and only if $B$ is either Hermitian or skew Hermitian.
\item[(d)] $A$ is Hermitian positive definite if and only if $B$ is Hermitian and definite (positive or negative).
\item[(e)] $A$ is skew Hermitian if and only if $\mathrm{e}^{\mathrm{i}\, \pi /4} B$ is Hermitian.
\item[(f)] $A$ is unitary if and only if $B$ is unitary.
\item[(g)] If $B$ is real, then $A$ is real orthogonal if and only if $B$ is real orthogonal.
\item[(h)] $A$ is complex orthogonal if and only if either $B$ or $\mathrm{i} B$ is complex orthogonal.
\end{itemize}
\end{corollary}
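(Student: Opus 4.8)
The plan is to reduce all of (a)--(h) to a small toolkit applied to the equation $A=B\otimes B$: the elementary Kronecker identities $(B\otimes B)^\top=B^\top\otimes B^\top$, $(B\otimes B)^\ast=B^\ast\otimes B^\ast$ (with $\ast$ the conjugate transpose), $(X\otimes Y)(Z\otimes W)=(XZ)\otimes(YW)$, and $-(B\otimes B)=(\mathrm{i}B)\otimes(\mathrm{i}B)$; the uniqueness-up-to-sign of a Kronecker square root, i.e.\ part (b) of the preceding theorem, which I will use in the form ``if $X\otimes X=Y\otimes Y\neq 0$ then $Y=\pm X$'' (legitimate because the block vec matrix of a nonzero Kronecker square is symmetric of rank one, by the corollary preceding that theorem); and, for (d) only, the fact that the eigenvalues of $B\otimes B$ are exactly the products $\lambda_i\lambda_j$ of the eigenvalues of $B$. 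I will also note at the outset that each property in (a)--(h) forces $A$ to be square, so $m^2=n^2$, hence $m=n$ and $B$ is $n\times n$, which makes all the matrix equations below dimensionally meaningful.

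Items (a), (c), (e) then go identically. $A$ symmetric says $B\otimes B=(B\otimes B)^\top=B^\top\otimes B^\top$, so by the uniqueness theorem $B^\top=\pm B$, i.e.\ $B$ is symmetric or skew-symmetric; $A$ Hermitian says $B\otimes B=B^\ast\otimes B^\ast$, so $B^\ast=\pm B$, i.e.\ $B$ is Hermitian or skew-Hermitian; and $A$ skew-Hermitian says $B^\ast\otimes B^\ast=-(B\otimes B)=(\mathrm{i}B)\otimes(\mathrm{i}B)$, so $B^\ast=\pm\mathrm{i}B$, which is precisely the statement that $\mathrm{e}^{\mathrm{i}\pi/4}B$ (for the $+$ sign) or $\mathrm{e}^{-\mathrm{i}\pi/4}B$ (for the $-$ sign) is Hermitian. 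Every converse here is a one-line substitution. For (b) I will argue by contradiction: if the nonzero $A$ were skew-symmetric then $B^\top\otimes B^\top=-(B\otimes B)=(\mathrm{i}B)\otimes(\mathrm{i}B)$, so $B^\top=\pm\mathrm{i}B$ by uniqueness; transposing this identity once more and substituting gives $B=(\pm\mathrm{i})^2B=-B$, whence $B=0$, contradicting $A\neq 0$.

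For (d), if $A$ is Hermitian positive definite then by (c) $B$ is Hermitian or skew-Hermitian. Writing the eigenvalues of $A=B\otimes B$ as the products $\lambda_i\lambda_j$ of those of $B$ and taking $j=i$, positive definiteness forces $\lambda_i^2>0$ for all $i$; this excludes a zero eigenvalue and, in the skew-Hermitian case, forces all the (purely imaginary) eigenvalues to vanish, i.e.\ $B=0$, against $A\neq 0$. So $B$ is Hermitian with nonzero eigenvalues, and $\lambda_i\lambda_j>0$ for all $i,j$ then forbids eigenvalues of opposite sign, i.e.\ $B$ is definite; conversely, if $B$ is Hermitian and definite then all $\lambda_i$ have one sign, so every $\lambda_i\lambda_j>0$, and since $B\otimes B$ is Hermitian it is positive definite. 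Items (f), (g), (h) share one mechanism: $A^\ast A=(B^\ast\otimes B^\ast)(B\otimes B)=(B^\ast B)\otimes(B^\ast B)$ and likewise $A^\top A=(B^\top B)\otimes(B^\top B)$, while $I_{n^2}=I_n\otimes I_n$ is itself a nonzero Kronecker square, so the uniqueness theorem turns $A^\ast A=I$ into $B^\ast B=\pm I$ and $A^\top A=I$ into $B^\top B=\pm I$. Since $B^\ast B$, and $B^\top B$ for real $B$, are positive semidefinite, the sign must be $+$, which gives (f) and (g). For (h) the sign $-$ can genuinely occur, and $B^\top B=-I$ is equivalent to $(\mathrm{i}B)^\top(\mathrm{i}B)=I$; this yields the alternative ``$\mathrm{i}B$ complex orthogonal'', and all converses are again one-line Kronecker computations.

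The step I expect to need the most care is pure bookkeeping: every appeal to the uniqueness clause of the preceding theorem must first be licensed by the corollary that precedes it, since that clause is phrased under the hypothesis that the relevant block vec matrix be symmetric of rank one, and I must check this for each of the nonzero Kronecker squares $B\otimes B$, $B^\top\otimes B^\top=(B\otimes B)^\top$, $B^\ast\otimes B^\ast=(B\otimes B)^\ast$ and $I_n\otimes I_n$ that occur. Apart from that, and the short spectral argument in (d), everything is substitution into Kronecker identities; the only genuinely delicate point of \emph{formulation} is (e), where the two admissible signs in $B^\ast=\pm\mathrm{i}B$ correspond to $\mathrm{e}^{\mathrm{i}\pi/4}B$ and $\mathrm{e}^{-\mathrm{i}\pi/4}B$ being Hermitian, and not to a single rotation.
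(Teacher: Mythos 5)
Your proof is correct and follows essentially the same route as the paper's: the elementary Kronecker identities combined with the uniqueness-up-to-sign of Kronecker square roots from the preceding theorem, plus the eigenvalue-product argument for (d) and positive semidefiniteness of $B^\ast B$ for (f)--(g). Your two extra observations --- that in (d) one must also use $\lambda_i\lambda_j>0$ to exclude an indefinite Hermitian $B$, and that in (e) the sign $B^\ast=-\mathrm{i}B$ makes $\mathrm{e}^{-\mathrm{i}\pi/4}B$ rather than $\mathrm{e}^{\mathrm{i}\pi/4}B$ Hermitian --- are legitimate refinements of details the paper's own proof glosses over (indeed its displayed equivalence $(\mathrm{e}^{\mathrm{i}\pi/4}B)^\ast=\pm\mathrm{e}^{\mathrm{i}\pi/4}B$ already concedes the point).
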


\begin{proof}
(a) $A^\top = B^\top \otimes B^\top$, so $A = A^\top$ if and only if $A = B \otimes B = B^\top \otimes B^\top$, which holds if and only if $B^\top = \pm B$. 

(b) If $A^\top = -A^\top,$ then $-A = - B \otimes B = (\mathrm{i} B) \otimes (\mathrm{i} B) = B^\top \otimes B^\top$ and hence $B^\top = \pm \mathrm{i} B = \pm \mathrm{i} (B^\top)^\top = \pm \mathrm{i} (\pm \mathrm{i} B)^\top = - B^\top$, so $B = 0$. 

(c) $A = A^*$ if and only if $A = B \otimes B = B^* \otimes B^*$, that is to say, $B^* = \pm B$. 

(d) Using (c) and the fact that the eigenvalues of $B \otimes B$ are the pairwise products of the eigenvalues of $B$, we can exclude the possibility that $B$ is skew Hermitian since in that case its nonzero eigenvalues (there must be at least one) would be pure imaginary and hence $B \otimes B$ would have at least one negative eigenvalue. 

(e) Under our hypothesis, the following statements are equivalent
(i) $A = - A^*$, (ii) $A = B \otimes B = - B^* \otimes B^* = (i B)^* \otimes (i B)^*$ (iii) $B^* = \pm \mathrm{i} B$ (iv) $(\mathrm{e}^{\mathrm{i}\, \pi /4} B)^* = \pm \mathrm{e}^{\mathrm{i}\, \pi /4} B$, and so the claim follows.

(f) $A^{-1} = B^{-1} \otimes B^{-1} = B^* \otimes B^*$ if and only if $B^* = \pm B^{-1}$ or, that is the same, $B B^* = \pm I$. However $B B^* = -I$ is not possible since $B B^*$ is positive definite. 

(g) Follows from (f). 

(h) $A^{-1} = B^{-1} \otimes B^{-1} = B^\top \otimes B^\top$ if and only if $B^\top = \pm B^{-1}$ that is to say $B B^\top = \pm I$ or, equivalently,  either $B B^\top = I$ or $(\mathrm{i}B) (\mathrm{i} B)^\top = I$.
\end{proof}

\section*{Acknowledgment.}
I would like to thank J.E Chac\'on for pointing me towards the question of the square roots of matrices for the Kronecker product. I also thank the anonymous referee for the comments and useful suggestions.


\end{document}